\documentclass[12pt]{article}
\usepackage{amsmath,amsthm,amssymb,latexsym,color}
\usepackage{bbm}

\usepackage{graphicx}
\usepackage{tikz}
\usetikzlibrary{decorations.pathreplacing,shapes.misc}

\usepackage{pgfplots}

\usepackage{hyperref}

\date{}

\theoremstyle{plain}

\newtheorem*{theorem*}{Theorem}

\newtheorem{theor}{Theorem}[section]

\newtheorem*{theor*}{Theorem}
\newtheorem*{conj*}{Conjecture}

\newtheorem{prop}[theor]{Proposition}

\newtheorem{lemma}[theor]{Lemma}

\theoremstyle{remark}

\def\R{{\mathbb R}}

\def\vr{{\rm vr}}
\def\pvr{{\rm pvr}}
\def\povr{{\rm povr}}
\def\povrs{{\rm povr}^{s}}
\def\absconv{{\rm absconv}}

\newcommand{\lam}{\lambda}

\title{A remark on projection outer volume ratio.}
\author{A. E. Litvak}

\newcommand\address{\noindent\leavevmode

\medskip
\noindent
Alexander  Litvak\\
Dept.~of Math.~and Stat.~Sciences,\\
University of Alberta, \\
Edmonton, AB, Canada, T6G 2G1.\\
\texttt{\small
e-mail:  alitvak@ualberta.ca}
}

\begin{document}

\maketitle

\begin{abstract}
We study the projection outer volume ratio $\povr_k(K)$ introduced in \cite{GLMP}.  
When $k\leq n/2$, we provide sharp bounds in the general case, namely 
\[
       \povr_k(K)\leq
       C \max\Big\{1,  \sqrt{\frac{k}{\ln(2n/k)}}\Big\},
\] where the maximal order is obtained for $K=B_1^n$. 
 Moreover, in the important particular case $K=B_2^n$ we show that 
 $\povr_k(B_2^n)$ is bounded by an absolute constant whenever $k\leq n/2$ and 
\[
   \povr_k(B_2^n)\leq
   C\sqrt{\frac{n}{n-k}}\,
   \ln\left(1+\frac{n}{n-k}\right)
\] for $k>n/2$. The latter bound is sharp up to a logarithmic term. 
\end{abstract}

\bigskip

{\small
\noindent{\bf 2020 Mathematics Subject Classification:}
primary: 46B06, 52A23; secondary: 52A20, 52A21.\\
\noindent{\bf Keywords:} convex bodies, volume ratio, projection outer volume ratio.
}

\section{Introduction}
\label{intro}

The volume ratio is an important affine invariant in the Asymptotic Geometric Analysis
(we refer to \cite{AGM-book} and references therein).  
Given two convex bodies 
$K,L\subset\R^n$ it is defined by
\[
 \vr(K,L)=\inf\left\{
 \left(\frac{|K|}{|T(L)|}\right)^{1/n}:
 T:\R^n\longrightarrow\R^n\ \text{is affine and }T(L)\subset K
 \right\}.
\]
Giannopoulos and Hartzoulaki \cite{GH} proved that for every two convex bodies
$K,L\subset\R^n$,
\[
       \vr(K,L)\leq C\sqrt n\,\ln (n+1).
\]
Recently, Galicer, Merzbacher and Pinasco \cite{GMP26} improved this estimate
to
\begin{equation}\label{vrbest}
       \vr(K,L)\leq C\sqrt{n\ln(n+1)}.
\end{equation}
In the opposite direction, Khrabrov \cite{Kh} proved that for every convex
body $K\subset\R^n$ there exists a convex body $L\subset\R^n$ such that
\[
       \vr(K,L)\geq c\sqrt{\frac{n}{\ln\ln n}}.
\]
This was improved by Galicer, Merzbacher and Pinasco \cite{GMP21}, who showed
that one can choose $L$ centrally symmetric and obtain the sharp lower estimate
\[
       \vr(K,L)\geq c\sqrt n.
\]
The order of this lower bound cannot be improved in general, as by 
John's theorem we have $\vr(B_2^n,L)\leq \sqrt n$ for every convex body $L$.  

It is natural to ask how these estimates change after passing to
lower-dimensional projections.  This question was studied in \cite{GLMP}. 
For $1\leq k\leq n$ and a convex body $K\subset\R^n$, the $k$-projection
volume ratio was defined there by
\[
 \pvr_k(K)=\sup_L\inf_{P,Q}\vr(PK,QL),
\]
where the supremum is taken over all convex bodies $L\subset\R^n$ and the
infimum is taken over all projections $P,Q$ of rank $k$. Here the volume ratio 
 of projections is naturally defined in the following way:
if $E,F$ are $k$-dimensional subspaces of $\R^n$ and
$A\subset E$, $B\subset F$ are convex bodies, then 
\[
 \vr(A,B)=\inf\left\{
 \left(\frac{|A|}{|T(B)|}\right)^{1/k}:
 T:F\longrightarrow E\ \text{is affine and }T(B)\subset A
 \right\}.
\]  Thus $\pvr_k(K)$
measures, in the worst case over $L$, how close suitable $k$-dimensional
projections of $K$ and $L$ can be in terms of volume ratio.  In \cite{GLMP}
quantitative lower bounds for $\pvr_k(K)$ were obtained for every convex body
$K$, together with upper bounds which are sharp up to logarithmic factors in
the relevant ranges.  In particular, the results there show that projection
volume ratios can remain large, more precisely, up to logarithmic factors, 
for every convex body $K$ we have $\pvr_k(K)\geq c \min\{\sqrt{k}, k/\sqrt{n}\}$.

The same paper introduced the natural ``outer'' counterpart of this parameter.
For a convex body $K\subset\R^n$ set
\begin{equation}\label{eq:def-povr}
 \povr_k(K)=\sup_L\inf_{P,Q}\vr(PL,QK),
\end{equation}
where again the supremum is taken over all convex bodies $L\subset\R^n$ and
the infimum is over all projections $P,Q$ of rank $k$.  It was already noted 
in  \cite{GLMP} that one cannot expect a similar lower bound for $\povr_k(K)$ 
uniformly over $K$. Indeed, 
Theorem~6.1 of \cite{GLMP} shows that if $\lam\in (0,1)$ and $k=\lambda n$
then
\[
       \povr_k(B_2^n)\leq C(\lambda).
\]
The proof presented there would lead to a very poor dependence of $C(\lambda)$ on $\lam$, namely 
$C(\lambda)\approx 1/\lam + 1/(1-\lam)$.

The purpose of the present note is to show that for every $k\leq n/2$ one has 
\[
       \povr_k(B_2^n)\leq C,
\]
where $C$ is  an absolute constant and for every $k>n/2$ 
\[
 c\sqrt{\frac{n}{n-k}}
 \leq \povr_k(B_2^n)
 \leq C\sqrt{\frac{n}{n-k}}
 \ln\left(1+\frac{n}{n-k}\right).
\]
Moreover, we show that for every convex body $K$ in $\R^n$ and every $k\leq n/2$,
\[
       \povr_k(K)\leq  
       C \max\Big\{1,  \sqrt{\frac{k}{\ln(2n/k)}}\Big\}.
\]
This bound is sharp, as the case $K=B_1^n$ shows. Note that for $k\geq n/2$ the upper bound 
is of the maximal (up to a logarithm) possible order $\sqrt{n}$ --- by (\ref{vrbest}) 
$$
  \povr_k(K)\leq  C\sqrt{ k \ln (k+1) }.  
$$

Throughout the paper $c,C,c_1,C_1,\ldots$ denote positive absolute constants
whose values may change from line to line, and $B_p^n$ denotes the unit ball
of $\ell_p^n$.

\section{The case $k\leq n/2$}
\label{s:small-k}

We start with the Euclidean ball.

\begin{theor}\label{thm:euclidean-small}
There exists an absolute constant $C>0$ such that for every $n\geq1$ and
$1\leq k\leq n/2$,
\[
   \povr_k(B_2^n)\leq C.
\]
\end{theor}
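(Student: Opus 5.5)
The idea is to pass to dimension $2k$ first and then use Milman's $M$-position to find a $k$-dimensional projection of $L$ that contains a ball and has volume comparable to that ball. Unwinding the definition \eqref{eq:def-povr}, we must show the following. For every convex body $L\subset\R^n$ there are a rank-$k$ projection $P$ and an ellipsoid $\mathcal E\subset PL$ with $(|PL|/|\mathcal E|)^{1/k}\le C$. Indeed, every $k$-dimensional ellipsoid is an affine image of a rank-$k$ projection of $B_2^n$, so we may take $Q$ to be an orthogonal projection and let the affine map $T$ in the definition of $\vr$ absorb the rest.

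The reduction to dimension $2k$ is just composition of projections. Since $2k\le n$, we first project $L$ onto an arbitrary $2k$-dimensional subspace, obtaining a body $L_1\subset\R^{2k}$. It then suffices to find a rank-$k$ projection of $L_1$ with bounded volume ratio. This is the point where $k\le n/2$ is used: we work in dimension $m=2k$ and project onto half of it, so every constant that depends on the proportion $k/m=1/2$ is absolute.

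The main step places $L_1$ in $M$-position. Translate $L_1$ so that its barycenter is at the origin, and apply an affine map so that $L_1$ is in Milman's $M$-position with $|L_1|=|B_2^m|$. For non-symmetric bodies with barycenter at $0$ we use the Milman--Pajor version. Then
\[
N(L_1,B_2^m)\le e^{Cm}\quad\text{and}\quad N(L_1^\circ,B_2^m)\le e^{Cm},
\]
because the polar of a centered body in $M$-position is again in $M$-position, up to constants, by the reverse Santal\'o (Bourgain--Milman) inequality.

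Two estimates then finish the proof.
\begin{itemize}
\item \emph{Inclusion.} We apply Milman's quotient-of-subspace argument, in its ``section of half dimension has bounded diameter'' form, to $L_1^\circ$. For a random $k$-dimensional subspace $E$, the covering bound $N(L_1^\circ,B_2^m)\le e^{Cm}$ together with the low $M^*$-estimate (or the Pajor--Tomczak-Jaegermann iteration) gives $L_1^\circ\cap E\subset C_1 B_2^m\cap E$. Dualizing, the orthogonal projection $P_E$ satisfies $P_E L_1\supset c_1 B_2^m\cap E$.
\item \emph{Volume.} We have $|P_EL_1|\le N(L_1,B_2^m)\,|P_E B_2^m|\le e^{Cm}|B_2^k|$, so $|P_EL_1|^{1/k}\le e^{2C}|B_2^k|^{1/k}$.
\end{itemize}
Taking $\mathcal E=c_1B_2^m\cap E$, these give $\vr(P_EL_1,B_2^k)\le e^{2C}/c_1$, which is an absolute constant.

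I expect the main obstacle to be the non-symmetric case, specifically the inclusion $P_EL_1\supset c_1B$. With only a symmetric $M$-position one would naturally pass to the difference body $L_1-L_1$. That yields $P_EL_1-P_EL_1\supset cB$, and this alone does not put a ball inside $P_EL_1$. The cleanest fix is to use the $M$-position for centered bodies, with the polar also taken about the barycenter, directly. One also needs the duality ``section of $L_1^\circ$ $\leftrightarrow$ projection of $L_1$'' to be taken with respect to the origin, which is interior to $L_1$. The volume bound is harmless in either setting, since Rogers--Shephard costs only a factor $4^m$.
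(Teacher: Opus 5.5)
Your proof is correct. It shares the paper's essential move: project first onto a $2k$-dimensional subspace, so that what remains is a proportional problem with ratio exactly $1/2$. The difference is in how that proportional case is settled.

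\textbf{How the two routes differ.} The paper applies Theorem~6.1 of \cite{GLMP} with $\lambda=1/2$ inside the $2k$-dimensional space. That theorem returns orthogonal projections $S$ of $PL$ itself, so composing with $P$ immediately gives orthogonal rank-$k$ projections of $L$. You instead reprove the $\lambda=1/2$ case from scratch via $M$-position. The covering bound $N(L_1,B_2^m)\le e^{Cm}$ controls $|P_EL_1|$, and the dual covering bound, through random sections of $L_1^\circ$, puts a ball of absolute radius inside $P_EL_1$. Your route is self-contained and shows transparently why the constant is absolute: every $e^{Cm}$ loss is $e^{2Ck}$, and $m-k=k$. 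The price is importing the non-symmetric $M$-position machinery, which the paper avoids by citing the result.

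\textbf{Details to tighten.}
\begin{enumerate}
\item The low $M^*$-estimate is not the tool that bounds the sections, since $M$-position gives no control on $M^*(L_1^\circ)$. What works is an elementary union bound. Cover $L_1^\circ$ by $e^{Cm}$ unit balls centered at points $x_i$. Use $\mathbb{P}(\mathrm{dist}(x,E)\le 1)\le (C/|x|)^{k}$, valid since $\dim E^\perp=k$. Then with positive probability $E$ misses every ball with $|x_i|\ge R$, for an absolute $R$, and so $L_1^\circ\cap E\subset (R+1)B_2^m$.
\item The bound you actually need, $N(L_1^\circ,B_2^m)\le e^{Cm}$, does not come from reverse Santal\'o. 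It follows from the forward Blaschke--Santal\'o inequality applied to $L_0\cap B_2^m$, where $L_0=L_1\cap(-L_1)$. Combine this with the Milman--Pajor bound $|L_0|\ge 2^{-m}|L_1|$ used in Lemma~\ref{lem:sym-reduction}. This gives $N(L_1^\circ,B_2^m)\le 8^m N(L_1,B_2^m)$.
\item After the affine map $A$ that puts $L_1$ in $M$-position, the composite $P_EAP_F$ is no longer an orthogonal projection. You should note that its image of $L$ is an affine image of an orthogonal rank-$k$ projection of $L$, and that $\vr$ is invariant under affine bijections. Then nothing is lost.
\end{enumerate}
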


\begin{proof}
Let $L\subset\R^n$ be a convex body.  Fix an orthogonal projection $P$ of
rank $2k$ and put $E=P\R^n$.  We regard $PL$ as a convex body in the
$2k$-dimensional Euclidean space $E$.

Applying Theorem~6.1 of \cite{GLMP} in the space $E$, with $\lambda=1/2$, we 
find orthogonal projections $S,T:E\to E$ of rank $k$ such that
\[
   \vr(SPL,TB_2^{2k})\leq C.
\]
Since $S$ and $T$ are orthogonal projections onto subspaces of $E$, the
operators $SP$ and $TP$ are orthogonal projections of $\R^n$ of rank $k$.
Moreover, $TB_2^{2k}$ is a Euclidean ball in its range and hence isometric to
$B_2^k$.  Therefore
\[
   \inf_{U,V}\vr(UL,VB_2^n)\leq C,
\]
where the infimum is over all rank $k$ orthogonal projections $U,V$.
Taking the supremum over $L$ proves the result.
\end{proof}

We now pass to an arbitrary convex body. We will use 
both, the Milman's version of the Dvoretzky theorem (we refer to \cite{AGM-book} 
for the history, the non-symmetric case, and references) and 
the isomorphic Dvoretzky theorem (see (\ref{eq:K-eucl}) below), which was proved 
in \cite{MS1,MS2} in the centrally symmetric case, then extended in \cite{GGM} 
and \cite{Gue} to the non-symmetric case for $k\leq  n/\ln n$, and finally 
to the optimal order  in  \cite[Theorem~6.7]{LT}. Note that  
in  \cite{LMT}, the isomorphic Dvoretzky theorem was proved in full generality for random subspaces 
(see Theorem~1 there together with  the remark following its proof). 

\begin{theor}\label{thm:general-small}
There are absolute constants $c,C>0$ such that for every convex body
$K\subset\R^n$,
\[
   \povr_k(K)\leq C, \qquad 1\leq k\leq c\ln n,
\]
and
\[
   \povr_k(K)\leq C\sqrt{\frac{k}{\ln(2n/k)}},
   \qquad c\ln n\leq k\leq n/2.
\]
\end{theor}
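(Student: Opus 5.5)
Given a convex body $L$, the plan is to find rank $k$ projections $U,V$ such that $VK$ is close to a Euclidean ball and $UL$ has bounded volume ratio relative to that ball. The second part costs only a constant, by Theorem~\ref{thm:euclidean-small} applied with $2k\le n$. The first part is where the isomorphic Dvoretzky theorem enters. The one tool that makes the two parts fit together is a triangle (composition) inequality for volume ratios in dimension $k$:
\[
 \vr(A,B)\le \vr(A,D)\,\vr(D,B).
\]
This holds because affine maps compose: if $T_1(D)\subset A$ and $T_2(B)\subset D$, then $T_1T_2(B)\subset A$, and the volume ratios multiply. We also use that $\vr$ is unaffected by applying affine bijections to either argument.

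\textbf{Step 1 (Dvoretzky step for $K$).} By duality, sections of $K^\circ$ correspond to projections of $K$. We apply Milman's Dvoretzky theorem (non-symmetric version, after placing the origin suitably, e.g.\ at the Santal\'o point or in John position) to get a rank $k$ projection $V$ and an ellipsoid $\mathcal E$ with $\mathcal E\subset VK\subset C\mathcal E$ whenever $k\le c\ln n$. For $c\ln n\le k\le n/2$ we instead use the isomorphic Dvoretzky theorem in the form (\ref{eq:K-eucl}) from \cite{LT,LMT}. It gives $\mathcal E\subset VK\subset a\,\mathcal E$ with
\[
 a\le C\sqrt{k/\ln(2n/k)}
\]
(or at least with volumetric distortion of this order). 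If the available version is stated for $k\le n/2$ or for proportional $k$, we may first pass to a subspace of dimension $2k$ or $n$ as needed; the logarithm $\ln(2n/k)$ is the relevant one in either case.

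\textbf{Step 2 (matching $L$ to a ball).} Apply Theorem~\ref{thm:euclidean-small}: there are rank $k$ projections $U,W$ with $\vr(UL,WB_2^n)\le C$. Here $WB_2^n$ is a $k$-dimensional Euclidean ball, which is affinely equivalent to $\mathcal E$. Hence $\vr(UL,\mathcal E)\le C$.

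\textbf{Step 3 (composition).} Since $\mathcal E\subset VK$, the identity map is admissible and
\[
 \vr(\mathcal E,VK)\le \Big(\frac{|\mathcal E|}{|VK|}\Big)^{1/k}\le 1 .
\]
That bound, however, points the wrong way: the definition of $\vr(PL,QK)$ requires mapping $QK$ inside $PL$. So we need $\vr(\mathcal E,VK)$, obtained by placing an affine image of $VK$ inside $\mathcal E$. Using $VK\subset a\mathcal E$, the map $x\mapsto x/a$ gives
\[
 \vr(\mathcal E,VK)\le \Big(\frac{|\mathcal E|}{|VK|/a^k}\Big)^{1/k}\le a .
\]
Then
\[
 \vr(UL,VK)\le \vr(UL,\mathcal E)\,\vr(\mathcal E,VK)\le Ca ,
\]
which is the bound claimed in both ranges after taking the supremum over $L$.

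The main obstacle is Step 1 in the range $c\ln n\le k\le n/2$. We need the non-symmetric isomorphic Dvoretzky theorem with exactly the distortion $\sqrt{k/\ln(2n/k)}$ for projections rather than sections, and with a centering compatible with duality. The full-generality statement of \cite{LT} (Theorem~6.7) or \cite{LMT} handles this. The remaining care point is to make sure the distortion enters only through the outer inclusion $VK\subset a\mathcal E$, which is the direction the definition of $\povr_k$ needs.
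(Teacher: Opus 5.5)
Your proposal is correct and follows essentially the same route as the paper. Theorem~\ref{thm:euclidean-small} gives a rank $k$ projection of $L$ within constant volume ratio of $B_2^k$. Dvoretzky's theorem (for $k\le c\ln n$), or the isomorphic Dvoretzky theorem (for $c\ln n\le k\le n/2$), places a projection of $K$ between an ellipsoid and its $a$-dilate. The multiplicative inequality then yields $\vr(UL,VK)\le Ca$.

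Two side remarks should be dropped. In Step~3 the identity map is admissible for $\vr(VK,\mathcal E)$, not for $\vr(\mathcal E,VK)$. The hedge about first passing to a $2k$-dimensional subspace would replace $\ln(2n/k)$ by $\ln 4$ and lose the logarithmic gain. The full-range theorem from \cite{LT} makes that detour unnecessary anyway.
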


\begin{proof}
Fix a convex body $L\subset\R^n$.  By Theorem~\ref{thm:euclidean-small}
there is a rank $k$ orthogonal projection $P$ such that, after identifying its
range with $\R^k$,
\begin{equation}\label{eq:L-eucl}
   \vr(PL,B_2^k)\leq C.
\end{equation}

If $1\leq k\leq c\ln n$, Dvoretzky's theorem (see e.g. \cite{AGM-book}) 
applied to $K$ gives a rank $k$ projection $Q$ which is at an absolute distance from $B_2^k$.
Together with \eqref{eq:L-eucl}, and the multiplicative inequality for the
volume ratio (see e.g. \cite[Fact~2.1]{GLMP}), this gives
\[
   \vr(PL,QK)\leq C,
\]
which proves  \(\povr_k(K)\leq C\).

Assume now that $c\ln n\leq k\leq n/2$.  The isomorphic Dvoretzky theorem
for projections gives a rank $k$ orthogonal projection $Q$ and an affine
image of $QK$, which we again denote by $QK$, such that for some $a, b>0$, 
\begin{equation}\label{eq:K-eucl}
   \frac{1}{b} B_2^k\subset QK\subset a B_2^k,
   \qquad
   ab \leq C\sqrt{\frac{k}{\ln(2n/k)}}.
\end{equation}
Therefore
\[
   \vr(B_2^k,QK)\leq C\sqrt{\frac{k}{\ln(2n/k)}}.
\]
Using again the multiplicative inequality for the volume ratio and
\eqref{eq:L-eucl},
\[
\begin{split}
   \vr(PL,QK)
   &\leq \vr(PL,B_2^k)\,\vr(B_2^k,QK) \leq C\sqrt{\frac{k}{\ln(2n/k)}}.
\end{split}
\]
Taking the infimum over $P,Q$ and then the supremum over $L$ proves the
theorem.
\end{proof}

Next example shows sharpness of Theorem~\ref{thm:general-small}.

\begin{prop}\label{prop:B1-sharp}
There are absolute constants $c,C,c_0>0$ such that
\[
 c\sqrt{\frac{k}{\ln(2n/k)}}
 \leq \povr_k(B_1^n)
 \leq C\sqrt{\frac{k}{\ln(2n/k)}},
 \qquad c_0\ln n\leq k\leq n/2.
\]
\end{prop}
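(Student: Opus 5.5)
The upper bound is already contained in Theorem~\ref{thm:general-small}, applied with $K=B_1^n$. So the whole task is the lower bound. My plan is to choose a single test body, namely $L=B_2^n$, and show that for every pair of rank $k$ projections $P,Q$ one has
\[
\vr(PB_2^n,QB_1^n)=\vr(B_2^k,QB_1^n)\ge c\sqrt{k/\ln(2n/k)}.
\]
Because $PB_2^n$ is a Euclidean ball for every orthogonal projection, the choice of $P$ is irrelevant. We may also reduce to orthogonal $Q$: a general projection of rank $k$ gives an affine image of an orthogonal projection of $B_1^n$, and the volume ratio is affine invariant in the second argument.

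Write $A=QB_1^n$. This is a symmetric body, the absolute convex hull of the $2n$ points $\pm Qe_i$ in $\R^k$. We need a lower bound for $|A|/|T(B_2^k)|$ valid for every ellipsoid $T(B_2^k)\subset A$. By affine invariance we may apply a linear map and assume that the maximal-volume ellipsoid in $A$ is $B_2^k$; translations are harmless by symmetry. The goal then becomes
\[
|A|^{1/k}\ge c\sqrt{k/\ln(2n/k)}\,|B_2^k|^{1/k}\approx c/\sqrt{\ln(2n/k)}.
\]
Since $B_2^k\subset A$ and $A$ is the absolute convex hull of $N=2n$ points $x_i$, the points must be "large": every direction $\theta$ satisfies $h_A(\theta)=\max_i|\langle x_i,\theta\rangle|\ge1$.

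The key step is the volume lower bound for such a polytope. A polytope with $N$ vertices whose support function is at least $1$ in every direction has volume at least that of its maximal ellipsoid, which only gives $|B_2^k|$. That is not enough, so I would use duality instead. The polar $A^\circ=\{y:|\langle x_i,y\rangle|\le1\ \forall i\}$ is contained in $B_2^k$. By John's theorem, or by the maximal-volume-ellipsoid position, $A\subset\sqrt k B_2^k$, so $|x_i|\le\sqrt k$.

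Then the Carl--Pajor and Gluskin bound $|\absconv(x_1,\dots,x_N)|^{1/k}\le C\sqrt{\ln(2N/k)}\,\max_i|x_i|/k$ controls the volume from above. Combining that with Santal\'o's inequality and the reverse Santal\'o (Bourgain--Milman) inequality, we obtain $|A^\circ|^{1/k}\le C\sqrt{\ln(2n/k)}/k$. Hence $|A|^{1/k}\ge c/\sqrt{\ln(2n/k)}$.

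For this to be consistent with the normalization, I must check that the maximal-volume-ellipsoid position gives $|x_i|\le\sqrt k$. It does: $A\subset\sqrt k B_2^k$ is exactly John's theorem for symmetric bodies.

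The last step is the arithmetic: $|B_2^k|^{1/k}\approx 1/\sqrt k$, so $\vr\ge c\sqrt k/\sqrt{\ln(2n/k)}$, as required.

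The main obstacle I expect is the Carl--Pajor/Gluskin volume estimate for the polar. It holds with $N=2n$ points provided $k\le N$, which is fine here since $k\le n/2$. The hypothesis $k\ge c_0\ln n$ is only needed to match the upper bound, because for smaller $k$ the right-hand side is below a constant. So the remaining care is to make sure the logarithm appears as $\ln(2N/k)\approx\ln(2n/k)$ and not as $\ln N$.
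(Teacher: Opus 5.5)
There is a genuine gap: you have reversed the direction of the volume ratio. By definition, $\vr(B_2^k,QB_1^n)$ is the infimum of $(|B_2^k|/|T(QB_1^n)|)^{1/k}$ over affine $T$ with $T(QB_1^n)\subset B_2^k$. So what has to be shown is an \emph{upper} bound on the volume of every image of $QB_1^n$ that fits inside the Euclidean ball. You instead consider ellipsoids $T(B_2^k)\subset A=QB_1^n$, i.e.\ the classical volume ratio $\vr(QB_1^n,B_2^k)$, and the lower bound you aim for there is false. For the coordinate projection onto the first $k$ coordinates, $QB_1^n=B_1^k$, whose volume ratio is bounded by an absolute constant.

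Your own tools refute the target too. In John position $\|x_i\|_2\le\sqrt k$, so the Carl--Pajor--Gluskin bound gives $|A|^{1/k}\le C\sqrt{\ln(2n/k)/k}$. This is incompatible with your goal $|A|^{1/k}\ge c/\sqrt{\ln(2n/k)}$ as soon as $k\gg\ln^2(2n/k)$ (e.g.\ $k=n/2$). The duality step fails for the same reason. Carl--Pajor bounds $|A|$ from above, and the reverse Santal\'o inequality converts this into a \emph{lower} bound on $|A^\circ|$, never into the claimed upper bound $|A^\circ|^{1/k}\le C\sqrt{\ln(2n/k)}/k$. That upper bound fails for $A=\sqrt k B_1^k$, where $|A^\circ|^{1/k}=2/\sqrt k$, once $k\gg\ln(2n/k)$.

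The repair is short and uses exactly the lemma you named, applied to the right body. Fix $Q$ and an affine $T$ with $T(QB_1^n)\subset B_2^k$; since both bodies are symmetric, $T$ may be taken linear. With $w_i=TQe_i$ one has $\|w_i\|_2\le 1$ and $T(QB_1^n)=\absconv\{w_1,\dots,w_n\}$. Carl--Pajor--Gluskin then gives $|T(QB_1^n)|^{1/k}\le C\sqrt{\ln(2n/k)}/k$. Comparing with $|B_2^k|^{1/k}\asymp k^{-1/2}$ yields $\vr(B_2^k,QB_1^n)\ge c\sqrt{k/\ln(2n/k)}$ for every $Q$. No John position, polarity or Santal\'o-type inequality is needed; this is the paper's argument.
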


\begin{proof}
The upper estimate follows from Theorem~\ref{thm:general-small}.  For the
lower estimate, choose $L=B_2^n$ in \eqref{eq:def-povr}.  Thus
\[
 \povr_k(B_1^n)
 \geq \inf_{\operatorname{rank}Q=k}\vr(B_2^k,QB_1^n).
\]
Fix such a projection $Q$.  Let $T$ be an affine map satisfying
$T(QB_1^n)\subset B_2^k$.  Since both bodies are symmetric, we can assume that $T$
is linear.
Set $w_i=TQe_i$, $1\leq i\leq n$.  Then $\|w_i\|_2\leq1$ and
\[
   T(QB_1^n)=\absconv\{w_1,\ldots,w_n\}.
\]
The standard volume bound (see \cite{BF,CP,Gl}) yields 
\[
   |T(QB_1^n)|^{1/k}
   \leq C\frac{\sqrt{\ln(2n/k)}}{k}.
\]
Since $|B_2^k|^{1/k}\asymp k^{-1/2}$,
\[
 \left(\frac{|B_2^k|}{|T(QB_1^n)|}\right)^{1/k}
 \geq c\sqrt{\frac{k}{\ln(2n/k)}}.
\]
This proves the lower bound.
\end{proof}

\section{The case $k>n/2$: the Euclidean ball}
\label{s:large-k}

In this section we provide the upper bound on $\povr_k(B_2^n)$ in the regime $k>n/2$, 
which is sharp up to a logarithmic factor. We first note that whenever $K$ is symmetric, 
it is enough to consider only  symmetric bodies $L$ in the definition of $\povr_k(K)$. 
Denote, 
\[
 \povrs_k(K)=\sup_{L=-L}\inf_{P,Q}\vr(PL,QK).
\]

\begin{lemma}\label{lem:sym-reduction}
Let $K\subset\R^n$ be a symmetric convex body and let $1\leq k<n$.
Then
\[
 \povrs_k(K)\leq\povr_k(K)
 \leq \left(2^n{n\choose k}\right)^{1/k}\povrs_k(K).
\]
In particular, if $k>n/2$, then
\[
   \povr_k(K)\leq16\,\povrs_k(K).
\]
\end{lemma}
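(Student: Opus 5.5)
The left inequality holds because the supremum defining $\povrs_k(K)$ runs over a subfamily (symmetric bodies $L$) of the family used for $\povr_k(K)$. For the right inequality, we fix an arbitrary convex body $L\subset\R^n$ and symmetrize it by the difference body $D=L-L$, which is symmetric. By the definition of $\povrs_k(K)$, for any $\varepsilon>0$ there are rank $k$ projections $P,Q$ and an affine map $T$ with $T(QK)\subset PD$ and
\[
 \left(\frac{|PD|}{|T(QK)|}\right)^{1/k}\leq \povrs_k(K)+\varepsilon .
\]
Since $K$ is symmetric, $QK$ is symmetric, so $T(QK)$ is a centrally symmetric body with some center $x_0$. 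Since $PD=PL-PL$ and $T(QK)\subset PD$, translating gives $T(QK)-x_0\subset PL-PL$. We want a translate of a subset of $T(QK)$ inside $PL$ with controlled volume loss.

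The key tool is the Rogers--Shephard inequality in $E=P\R^n$ (dimension $k$): $|PL-PL|\leq \binom{2k}{k}|PL|$. It relates volumes but does not directly give containment. To get containment, we use the symmetric body $M=PL\cap(-PL+y)$ for a suitable $y$. Equivalently, we may use the body $PL\cap(PL+z)$, whose volume as a function of $z$ integrates to $|PL|^2$ over $z\in PL-PL$. Alternatively, a cleaner route is the symmetric set $(PL-y)\cap(y-PL)$, where $y$ is the centroid; by Milman--Pajor this has volume at least $2^{-k}|PL|$ and lies inside the translated $PL$.

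So the plan is as follows. Choose the symmetric body $L_0=(L-g)\cap(g-L)$, with $g$ the centroid of $L$. Then $L_0\subset L-g$, and projections preserve this: $PL_0\subset PL-Pg$. Also $|PL_0|$ compares to $|PL|$, but projections of an intersection are not the intersection of projections, so the Milman--Pajor bound on $|PL_0|$ is not immediate. This is why the statement has the factor $2^n\binom{n}{k}$, which involves $n$ rather than $k$: we should work in $\R^n$ and pass to projections via a volume inequality for sections and projections. Specifically, $|L_0|\geq 2^{-n}|L|$. The Rogers--Shephard projection–section inequality gives
\[
|PA|\,|A\cap F^\perp|\leq \binom{n}{k}|A|
\]
for $F=\operatorname{range}P$, together with the trivial bound $|A\cap(x+F^\perp)|\,|PA|\geq|A|$ for the maximal section. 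The main obstacle is to compare $|PL_0|$ with $|PL|$ through these. The first attempt would be
\[
|PL_0|\geq \frac{|L_0|}{\max_x|L_0\cap(x+F^\perp)|}\geq \frac{2^{-n}|L|}{\max_x|L\cap(x+F^\perp)|}\geq 2^{-n}\binom{n}{k}^{-1}|PL|,
\]
where the last step uses the Rogers--Shephard inequality $|PL|\max_x|L\cap(x+F^\perp)|\leq\binom{n}{k}|L|$.

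Given this, apply the definition of $\povrs_k(K)$ to $L_0$, obtaining $P,Q,T$ with $T(QK)\subset PL_0\subset PL-Pg$. Translating by $Pg$ gives $T(QK)+Pg\subset PL$, and
\[
\frac{|PL|}{|T(QK)|}\leq 2^n\binom{n}{k}\frac{|PL_0|}{|T(QK)|}.
\]
Taking $k$-th roots, the infimum over $P,Q$, and the supremum over $L$ gives the right inequality. For the final claim with $k>n/2$, we have $\binom{n}{k}\leq 2^n$, so the factor is at most $4^{n/k}<16$.
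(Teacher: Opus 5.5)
Your final argument is correct and is essentially the paper's proof. You symmetrize $L$ about its centroid and use Milman--Pajor in $\R^n$ to get $|L_0|\geq 2^{-n}|L|$. You then combine Fubini for $L_0$ with the Rogers--Shephard projection--section inequality for (a translate of) $L\supset L_0$, which gives $|PL|\leq 2^n{n\choose k}|PL_0|$ for every rank $k$ projection $P$. Finally you move the near-optimal copy of $QK$ from $PL_0$ into $PL$; your observation $PL_0\subset PL-Pg$ makes the paper's remark about replacing the affine map by its linear part unnecessary.

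The opening detours (the difference body, and Milman--Pajor applied inside $P\R^n$) are not used and can be dropped. The latter would not work directly anyway, because the symmetric body must be fixed in $\R^n$ before the projection $P$ is chosen.
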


\begin{proof}
The first inequality and the ``in particular part'' are trivial. 

 Let $L\subset\R^n$ be an arbitrary
convex body.  Translating $L$ if necessary, we may assume that its centroid
is the origin.  Set
\[
   L_0=L\cap(-L).
\]
By the Milman--Pajor inequality \cite[Corollary 3]{MP},
\begin{equation}\label{eq:MP}
   |L_0|\geq2^{-n}|L|.
\end{equation}

Let $P$ be an orthogonal projection of rank $k$ and let $E=P\R^n$. By the 
Fubini theorem,
\[
 |L_0|\leq |PL_0|\,
 \max_{x\in E}|L_0\cap(x+E^\perp)|.
\]
Since $L_0\subset L$, the Rogers--Shephard  inequality
\cite{RS} yields
\[
 |L_0|
 \leq |PL_0|{n\choose k}\frac{|L|}{|PL|}.
\]
Together with \eqref{eq:MP}, this implies
\begin{equation}\label{eq:projection-sym}
 \left(\frac{|PL|}{|PL_0|}\right)^{1/k}
 \leq \left(2^n{n\choose k}\right)^{1/k}.
\end{equation}

Fix rank $k$ projections $P,Q$.  Since $PL_0$ and $QK$ are symmetric, the
affine map in the definition of $\vr(PL_0,QK)$ may be replaced by its linear
part.  Consequently the same image of $QK$ which is contained in $PL_0$ is
also contained in $PL$.  By \eqref{eq:projection-sym},
\[
 \vr(PL,QK)
 \leq \left(2^n{n\choose k}\right)^{1/k}\vr(PL_0,QK).
\]
Taking the infimum over $P,Q$ and then the supremum over $L$ proves the
second inequality. 
\end{proof}

We will also need the following result of Alonso-Guti\'errez, Bastero,
Bernu\'es and Paouris \cite[Theorem~3.2]{ABBP}.

\begin{theor}[Alonso-Guti\'errez--Bastero--Bernu\'es--Paouris]
\label{thm:ABBP}
For every symmetric convex body $L\subset\R^n$ and every
$1\leq k\leq n-1$, there exists a rank $k$ orthogonal projection $P$ such
that
\begin{equation}\label{eq:ABBP}
 \vr(PL,B_2^k)
 \leq C\sqrt{\frac{n}{n-k}}
 \ln\left(1+\frac{n}{n-k}\right),
\end{equation}
where $C>0$ is an absolute constant.
\end{theor}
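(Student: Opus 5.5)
The plan is to find one rank $k$ orthogonal projection $P$ for which $PL$ contains a Euclidean ball $rB_2^k$ of controlled radius, while the volume of $PL$ is controlled by the same scale. Since $L$ is symmetric, all affine maps may be taken linear. It suffices to find a linear image of $L$ (still called $L$) and a $k$-dimensional subspace $E$ with
\[
 r\,B_2^k\subset PL \quad\text{and}\quad \left(\frac{|PL|}{|rB_2^k|}\right)^{1/k}\leq C\sqrt{\frac{n}{n-k}}\ln\Big(1+\frac{n}{n-k}\Big),
\]
where $P$ is the orthogonal projection onto $E$. If $k\leq n/2$, Theorem~\ref{thm:euclidean-small} (by duality, or its proof) already gives a constant bound, so the case of interest is $k>n/2$. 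Put $m=n-k$ and $\lambda=n/m\geq 2$.

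\emph{Step 1 (inner ball via duality and low $M^*$).} The inclusion $rB_2^k\subset PL$ is equivalent to $L^\circ\cap E\subset r^{-1}B_2^k$. I would use the low $M^*$ estimate (Pajor--Tomczak-Jaegermann / Gordon) for $L^\circ$: a random subspace $E$ of codimension $m$ satisfies
\[
 \operatorname{diam}(L^\circ\cap E)\leq C\sqrt{n/m}\,M^*(L^\circ)=C\sqrt{\lambda}\,M(L).
\]
Hence $PL\supset c\,(\sqrt{\lambda}\,M(L))^{-1}B_2^k$.

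\emph{Step 2 (volume of the projection).} By Urysohn's inequality in $E$, $(|PL|/|B_2^k|)^{1/k}\leq w(PL)$. On average over $E$, $w(PL)$ equals $M^*(L)$. Choosing $E$ that works for both Steps 1 and 2 (both hold with high probability, using Markov for Step 2) gives
\[
 \vr(PL,B_2^k)\leq C\sqrt{\lambda}\,M(L)M^*(L).
\]
In the $\ell$-position, Pisier's estimate gives $MM^*\leq C\ln n$. This yields the theorem with $\ln n$ in place of $\ln(1+\lambda)$.

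\emph{Step 3 (main obstacle: replacing $\ln n$ by $\ln(1+\lambda)$).} When $m$ is proportional to $n$ the target bound is a constant, so Urysohn plus $MM^*$ is too lossy. I would instead put $L$ in Pisier's $\alpha$-regular $M$-position, with covering estimates
\[
 \max\{\log N(L,tB_2^n),\ \log N(B_2^n,tL^\circ)\}\leq \frac{C\,n}{(2-\alpha)^{\alpha/2}\,t^{\alpha}}, \qquad t\geq 1
\]
(normalized so that $|L|=|B_2^n|$). I would replace Step 2 by a volumetric bound: $|PL|\leq N(L,tB_2^n)\,|tB_2^k|$. I would replace Step 1 by the covering-number form of the low $M^*$ estimate, i.e.\ a dual Sudakov/Milman-type diameter bound for sections of codimension $m$ expressed through $N(L^\circ, tB_2^n)$. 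Taking $t^\alpha\approx n/m=\lambda$ makes the entropy cost $\exp(Cn/\lambda)=\exp(Cm)$, which after the $1/k$-th root with $k\geq n/2$ is an absolute constant. The diameter of the section is then of order $\lambda^{1/\alpha}$ times the regularity constant. Choosing $\alpha=2-1/\ln(1+\lambda)$ gives $\lambda^{1/\alpha}\leq C\sqrt{\lambda}$ and $(2-\alpha)^{-1/2}\approx\sqrt{\ln(1+\lambda)}$. Each of the two sides (inner radius and volume) contributes one such factor, which produces the claimed $\sqrt{\lambda}\,\ln(1+\lambda)$. The delicate point I expect to fight with is the section-diameter step from entropy numbers with the correct power of $(2-\alpha)$. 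If this does not close, the fallback is to iterate Step 2 on a chain of intermediate projections of dimensions $n\to n-m\cdot 2^j$ to localize the logarithm.
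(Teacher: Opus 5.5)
The paper does not prove this statement; it quotes \cite[Theorem~3.2]{ABBP}. Judged on its own, your proposal has a genuine gap, and it sits exactly where the theorem's content lies.

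\textbf{The reduction and the bookkeeping.} Steps~1--2 are fine, but they give only $\sqrt{\lambda}\ln n$. As written, Step~3 takes $t^\alpha\approx\lambda$ in $|PL|\le N(L,tB_2^n)\,|tB_2^k|$. That already costs $t\approx\lambda^{1/\alpha}\approx\sqrt{\lambda}$ on the volume side, so together with the inner radius you would get order $\lambda$, not $\sqrt\lambda$. The volume has to be taken at $t\approx c(\alpha)^{1/\alpha}$, where $c(\alpha)n/t^\alpha\lesssim k$; this costs $(2-\alpha)^{-1/2}$.

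In fact the volume side costs nothing. Combine:
\begin{itemize}
\item Rogers--Shephard: $|PL|\,|L\cap E^\perp|\le\binom{n}{m}|L|$;
\item Brunn, for symmetric $L$: $|B_2^m|\le N(B_2^n,tL)\,t^m\,|L\cap E^\perp|$;
\item $|B_2^n|\le|B_2^k|\,|B_2^m|$.
\end{itemize}
With $t=c(\alpha)^{1/\alpha}$ and $m/k\le 2/\lambda$, this gives $(|PL|/|B_2^k|)^{1/k}\le C$ for every $E$ when $k\ge n/2$.

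So everything reduces to the inner radius. You need a codimension-$m$ subspace $E$ with $L^\circ\cap E\subset C\sqrt{\lambda}\ln(1+\lambda)\,B_2^n$, or $L^\circ\cap E\subset C(2-\alpha)^{-1/2}\lambda^{1/\alpha}B_2^n$ with your accounting. This is precisely the step you leave open.

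\textbf{Why the tool you name does not deliver it.} First, covering at the single scale where $\log N\approx m$ does not control sections. Take $K=\operatorname{conv}(B_2^n\cup\{\pm\sqrt{\lambda}\,e_1\})$. It has $\log N(K,2B_2^n)\le\ln(1+\sqrt\lambda)$, which is much smaller than $m$ when $m\gg\ln\lambda$. Yet for a typical $E$ of codimension $m$, the point $P_E(\tfrac13\sqrt{\lambda}\,e_1)$ lies in $K\cap E$ and has norm $\ge c\sqrt{\lambda}$. So one must use the whole entropy profile.

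The straightforward way to do that is Gordon's criterion: for random $E$, $L^\circ\cap E\subset rB_2^n$ once $\ell^*(L^\circ\cap rB_2^n)\le c\sqrt{m}\,r$, with $\ell^*$ bounded by Dudley's integral under Pisier's estimates. This gives $\ell^*(L^\circ\cap rB_2^n)\lesssim\sqrt{c(\alpha)n}\;r^{1-\alpha/2}\min\{(2-\alpha)^{-1},\ln r\}$. The factor coming from $\int_1^r s^{-\alpha/2}\,ds$ costs an extra $\ln\lambda$. Optimizing over $\alpha$ (with $c(\alpha)\approx(2-\alpha)^{-1}$) yields only $r\approx\sqrt{\lambda}(\ln\lambda)^{3/2}$. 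That means $\sqrt{\lambda}(\ln\lambda)^{3/2}$ overall even with the $O(1)$ volume bound, and $\sqrt\lambda(\ln\lambda)^2$ with your volume factor. A genuinely sharper section-diameter (Gelfand-number) estimate is required, and you neither prove nor cite one. As it stands, the proposal establishes only the $\ln n$ version.

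\textbf{The fallback.} It does not fill the gap either. Re-positioning an intermediate projection in its $\ell$-position again gives only $MM^*\lesssim\ln d(\cdot,\ell_2)$. Projections need not reduce that distance: rank-$k$ projections of $B_1^n$ with $k\ge n/2$ stay at distance $\ge c\sqrt{n}$ from Euclidean. So it is unclear what the chain of projections would localize.
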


\begin{theor}\label{thm:euclidean-large}
There exist absolute constants $c,C>0$ such that for every $n/2<k<n$,
\[
 c\sqrt{\frac{n}{n-k}}
 \leq \povr_k(B_2^n)
 \leq C\sqrt{\frac{n}{n-k}}
 \ln\left(1+\frac{n}{n-k}\right).
\]
\end{theor}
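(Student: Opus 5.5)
The upper bound will follow from Lemma~\ref{lem:sym-reduction} combined with Theorem~\ref{thm:ABBP}. Since $k>n/2$ and $B_2^n$ is symmetric, Lemma~\ref{lem:sym-reduction} gives $\povr_k(B_2^n)\leq 16\,\povrs_k(B_2^n)$, so it suffices to bound $\vr(PL,QB_2^n)$ for symmetric $L$. Given a symmetric convex body $L$, Theorem~\ref{thm:ABBP} produces a rank $k$ orthogonal projection $P$ with $\vr(PL,B_2^k)\leq C\sqrt{n/(n-k)}\ln(1+n/(n-k))$. Every rank $k$ orthogonal projection $Q$ maps $B_2^n$ onto a Euclidean ball isometric to $B_2^k$. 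Hence $\inf_{P,Q}\vr(PL,QB_2^n)$ is bounded by the same quantity. Taking the supremum over symmetric $L$ and multiplying by $16$ gives the upper estimate.

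For the lower bound we must exhibit one body $L$ such that every rank $k$ projection of $L$ has volume ratio at least $c\sqrt{n/(n-k)}$ relative to the Euclidean ball. The natural choice is $L=B_1^n$, or equivalently, by symmetry of roles, we need $\vr(PB_1^n,B_2^k)$ large for every $P$. Since $B_2^k$ is the body being mapped in, the relevant quantity is the ratio $|PB_1^n|/|\mathcal E|$, where $\mathcal E$ is the maximal volume ellipsoid inside $PB_1^n$.

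The key steps are as follows. First, for a rank $k$ orthogonal projection $P$, apply a linear map so that the John ellipsoid of the symmetric body $A=PB_1^n$ is $B_2^k$. Second, bound $|A|$ from below. The body $A$ is the absolute convex hull of the $n$ points $v_i=Pe_i$ in this position. The containment $B_2^k\subset A$ forces the polar $A^\circ=\{y:\max_i|\langle y,v_i\rangle|\le1\}\subset B_2^k$. Third, use Blaschke--Santal\'o/Bourgain--Milman (reverse Santaló) to convert this to $|A|^{1/k}\ge c/|A^\circ|^{1/k}$. Then show $|A^\circ|^{1/k}$ is small: $A^\circ$ is an intersection of $n$ symmetric slabs containing no room beyond $B_2^k$.

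Before the original normalization, $\sum_i\|Pe_i\|_2^2=k$. So, in the naive orthogonal position, the mean of $\|v_i\|_2^2$ is $k/n>1/2$, and a typical point satisfies $|\langle y,v_i\rangle|\approx \|y\|_2\sqrt{k/n}/\sqrt k$. The estimate we aim for is $|A^\circ|^{1/k}\lesssim \sqrt{(n-k)/n}\,|B_2^k|^{1/k}$. Its heuristic is that $B_1^n$ has inradius $1/\sqrt n$ but volume radius about $1/\sqrt n$ times a constant, while projection to codimension $n-k$ can only inflate the inradius by a factor related to $\sqrt{n/(n-k)}$.

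A cleaner route is duality: $\vr(PB_1^n,B_2^k)$ compares with the outer volume ratio of the section $E\cap B_\infty^n$ via Santaló. A lower bound of $\sqrt{n/(n-k)}$ for sections of the cube of codimension $n-k$ follows from the known estimate that a codimension-$m$ section of $B_\infty^n$ has volume radius $\gtrsim 1$ (Vaaler) while its maximal ellipsoid has radius $\lesssim\sqrt{m/n}$ times the scale. The latter holds because $E$ meets the cube's boundary close to the center: any $k$-dimensional subspace contains a vector whose $\ell_\infty$ norm is at least $\sqrt{k/n}$ times its $\ell_2$ norm, and more sharply, some vector with $\ell_\infty\gtrsim$ its $\ell_2$ norm times $\sqrt{(n-k)/n}$ relative to volume.

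The main obstacle is making this last step rigorous, namely matching the inradius/volume comparison exactly at the $\sqrt{n/(n-k)}$ scale. If $L=B_1^n$ fails to give this, I would instead try $L$ a random polytope with $N\approx n$ vertices (or the cube), using a Gluskin-type volume count of absolute convex hulls, as in Proposition~\ref{prop:B1-sharp}, with $k$ replaced by the codimension.
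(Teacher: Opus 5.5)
Your upper bound is exactly the paper's argument: Lemma~\ref{lem:sym-reduction}, then Theorem~\ref{thm:ABBP}, then the fact that every rank $k$ orthogonal projection of $B_2^n$ is a Euclidean ball.

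The lower bound has a genuine gap, because the test body $L=B_1^n$ cannot work. For $k>n/2$ and \emph{every} rank $k$ orthogonal projection $P$ one has $\vr(PB_1^n,B_2^k)\le C$. First, $PB_1^n\supset n^{-1/2}PB_2^n=n^{-1/2}B_2^k$. Second, $PB_1^n=\absconv\{Pe_1,\dots,Pe_n\}$ with $\|Pe_i\|_2\le 1$, so the absolute-convex-hull volume bound used in Proposition~\ref{prop:B1-sharp} gives $|PB_1^n|^{1/k}\le C\sqrt{\ln(2n/k)}/k$. Together with $|B_2^k|^{1/k}\asymp k^{-1/2}$, this yields $\vr(PB_1^n,B_2^k)\le C'\sqrt{(n/k)\ln(2n/k)}\le C''$. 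This is just the bounded volume ratio of $B_1^n$, which your own heuristic records, surviving projections of proportional rank.

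Your dual heuristic fails at the same point. Via Santal\'o and Bourgain--Milman, $\vr(PB_1^n,B_2^k)$ is comparable to the \emph{outer} volume ratio of $E\cap B_\infty^n$. That quantity is governed by the smallest ellipsoid \emph{containing} the section, not by its largest inscribed one. Since $E\cap B_\infty^n\subset \sqrt{n}\,B_2^n\cap E$ and $|E\cap B_\infty^n|\ge 2^k$ by Vaaler, the outer volume ratio is at most $C\sqrt{n/k}$. So the estimate $|A^\circ|^{1/k}\lesssim\sqrt{(n-k)/n}\,|B_2^k|^{1/k}$ you aim for is false.

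The same computation rules out your fallback. Let $L$ be the absolute convex hull of $N=O(n)$ points, and put it in John position $B_2^n\subset L\subset\sqrt{n}\,B_2^n$; this does not change $\inf_P\vr(PL,B_2^k)$. Then each $PL$ contains $B_2^k$ and is the absolute convex hull of $N$ points of norm at most $\sqrt{n}$, so $\vr(PL,B_2^k)\lesssim\sqrt{(n/k)\ln(2N/k)}=O(1)$. Gluskin-type volume counts bound volumes from \emph{above}, which is what one needs for the inner body (as for $QB_1^n$ in Proposition~\ref{prop:B1-sharp}). Here $L$ is the outer body, and you need its projections to be large relative to their inscribed ellipsoids.

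The body that works is the cube, which you mention only in passing. The paper takes $L=B_\infty^n$, so that $PB_\infty^n$ is a zonotope, namely the unit ball of a $k$-dimensional quotient of $\ell_\infty^n$. It then invokes the classical small-codimension estimate $\inf_P\vr(PB_\infty^n,B_2^k)\ge c\sqrt{n/(n-k)}$ for such quotients.

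As a sanity check of the contrast, take $k=n-1$ and $E=(1,\dots,1)^\perp$. Cauchy's projection formula gives $|PB_\infty^n|=2^{n-1}\sqrt{n}$. Meanwhile $h_{PB_\infty^n}(\theta)=\|\theta\|_1\ge\sqrt{2}$ for unit $\theta\in E$, with equality at $(e_1-e_2)/\sqrt{2}$. By permutation invariance the maximal inscribed ellipsoid is $\sqrt{2}\,B_2^{n-1}$, so the volume ratio is of order $\sqrt{n}$, whereas for $PB_1^n$ it is bounded.

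Without the choice $L=B_\infty^n$ and this quotient estimate (or a proof of it), your proposal does not establish the lower bound.
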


\begin{proof}
We first prove the upper estimate.  By Lemma~\ref{lem:sym-reduction}, it is enough to 
consider only symmetric bodies $L$ under the supremum.
For every such $L$, Theorem~\ref{thm:ABBP} provides a rank $k$
projection $P$ with
\[
 \vr(PL,B_2^k)
 \leq C\sqrt{\frac{n}{n-k}}
 \ln\left(1+\frac{n}{n-k}\right).
\]
Every rank $k$ orthogonal projection of $B_2^n$ is a Euclidean ball in its
range, so the upper estimate follows directly from the definition of
$\povr_k(B_2^n)$.

For the lower estimate, take $L=B_\infty^n$ in the definition of
$\povr_k(B_2^n)$.  Since every rank $k$ projection of $B_2^n$ is a Euclidean
ball in its range,
\[
 \povr_k(B_2^n)
 \geq \inf_{\operatorname{rank}P=k}\vr(PB_\infty^n,B_2^k).
\]
The body $PB_\infty^n$ is the unit ball of a $k$-dimensional quotient of
$\ell_\infty^n$.  The classical small-codimension estimates for quotients of
$\ell_\infty^n$ imply
\[
 \inf_{\operatorname{rank}P=k}\vr(PB_\infty^n,B_2^k)
 \geq c\sqrt{\frac{n}{n-k}};
\]
see \cite{GTJ,GMP,GJ} and the references therein.  This proves the lower
estimate and completes the proof.
\end{proof}

\subsection*{Acknowledgments}
The (natural) idea of passing to a $2k$-dimensional subsection in the first step of the proof of Theorem~\ref{thm:euclidean-small} was suggested by ChatGPT.
It also suggested the idea of using Isomorphic Dvoretzky theorem in the general case, however this idea was known to the author.  
Additionally ChatGPT was used for finding references and polishing the presentation, in particular it brought to our attention the reference  
\cite{ABBP} (our original proof of Theorem~\ref{thm:ABBP} was essentially the same as the one in \cite{ABBP}).

\address

\end{document}